\documentclass[12pt,reqno]{amsart}
\usepackage{amsmath,amssymb,amsfonts,amscd,latexsym,amsthm,mathrsfs,verbatim,comment,cite}
\usepackage{hyperref}
\textheight22cm \textwidth15cm \hoffset-1.7cm \voffset-.5cm
\let\eps\varepsilon
\let\lf\lfloor
\let\rf\rfloor
\renewcommand{\d}{{\mathrm d}}
\newcommand{\ba}{\boldsymbol{a}}
\newcommand{\bb}{\boldsymbol{b}}
\newcommand{\qbin}[2]{\genfrac{[}{]}{0pt}{}{#1}{#2}}
\newtheorem{theorem}{Theorem}
\newtheorem{lemma}{Lemma}
\begin{document}

\title{Congruences for $q$-binomial coefficients}

\date{23 January 2019. \emph{Revised}: 1 April 2019}

\author{Wadim Zudilin}
\address{Department of Mathematics, IMAPP, Radboud University, PO Box 9010, 6500~GL Nijmegen, Netherlands}
\email{w.zudilin@math.ru.nl}

\dedicatory{To George Andrews, with warm $q$-wishes and well-looking $q$-congruences}

\subjclass[2010]{11B65 (Primary), 05A10, 11A07 (Secondary)}
\keywords{Congruence; $q$-binomial coefficient; cyclotomic polynomial; radial asymptotics.}

\begin{abstract}
We discuss $q$-analogues of the classical congruence $\binom{ap}{bp}\equiv\binom{a}{b}\pmod{p^3}$, valid for primes $p>3$, as well as its generalisations. In particular, we prove related congruences for ($q$-analogues of) integral factorial ratios.
\end{abstract}

\maketitle

\section{Introduction}
\label{intro}

For $a$ non-negative integer, a standard $q$-environment includes
$q$-numbers $[a]=[a]_q=(1-q^a)/(1-q)\in\mathbb Z[q]$, $q$-factorials $[a]!=[1][2]\dotsb[a]\in\mathbb Z[q]$
and $q$-binomial coefficients
$$
\qbin{a}{b}={\qbin{a}{b}}_q=\frac{[a]!}{[b]!\,[a-b]!}\in\mathbb Z[q], \quad\text{where}\; b=0,1,\dots,a.
$$
One also adopts the cyclotomic polynomials
$$
\Phi_n(q)=\prod_{\substack{j=1\\(j,n)=1}}^n(q-e^{2\pi ij/n})\in\mathbb Z[q]
$$
as $q$-analogues of prime numbers, because these are the only factors of the $q$-numbers which are irreducible over~$\mathbb Q$.

Arithmetically significant relations often possess \emph{several} $q$-analogues.
While looking for $q$-extensions of the classical (Wolstenholme--Ljunggren) congruence
\begin{equation}
\binom{ap}{bp}\equiv\binom{a}{b}\pmod{p^3} \quad\text{for any prime}\; p>3,
\label{classics}
\end{equation}
more precisely, at a `$q$-microscope setup' (when $q$-congruences for truncated hypergeometric sums are read off from
the asymptotics of their non-terminating versions, usually equipped with extra parameters, at roots of unity\,---\,see~\cite{GZ19}) for Straub's $q$-congruence \cite{Str11}, \cite[Theorem 2.2]{Str18},
\begin{equation}
{\qbin{an}{bn}}_q\equiv{\qbin ab}_{q^{n^2}}-b(a-b)\binom ab\frac{n^2-1}{24}\,(q^n-1)^2\pmod{\Phi_n(q)^3},
\label{straub}
\end{equation}
this author accidentally arrived at
\begin{equation}
{\qbin{an}{bn}}\sigma_n^bq^{\binom{bn}2}
\equiv\binom{a-1}b+\binom{a-1}{a-b}\sigma_n^aq^{\binom{an}2}\pmod{\Phi_n(q)^2},
\label{cong}
\end{equation}
where the notation
$$
\sigma_n=(-1)^{n-1}
$$
is implemented.
Notice that the expression on the right-hand side is a sum of two $q$-monomials.
The $q$-congruence \eqref{cong} may be compared with another $q$-extension of~\eqref{classics},
\begin{equation}
{\qbin{an}{bn}}_q\equiv\sigma_n^{b(a-b)}q^{b(a-b)\binom n2}{\qbin{a}{b}}_{q^n}\pmod{\Phi_n(q)^2}
\label{andrews}
\end{equation}
for any $n>1$. This is given by Andrews in \cite{And99} for primes $n=p>3$ only;
though proved modulo $\Phi_p(q)^2$, a complimentary result from \cite{And99} demonstrates
that \eqref{classics} in its full modulo $p^3$ strength can be derived from~\eqref{andrews}.
More directly, Pan \cite{Pan13} shows that \eqref{andrews} can be generalised further to
\begin{equation}
{\qbin{an}{bn}}_q\equiv\sigma_n^{b(a-b)}q^{b(a-b)\binom n2}{\qbin ab}_{q^n}+ab(a-b)\binom ab\frac{n^2-1}{24}\,(q^n-1)^2\pmod{\Phi_n(q)^3}.
\label{pan}
\end{equation}
It is worth mentioning that the transition from $\Phi_n(q)^2$ to $\Phi_n(q)^3$ (or, from~$p^2$ to~$p^3$)
is significant because the former has a simple combinatorial proof (resulting from the ($q$-)Chu--Vandermonde identity)
whereas no combinatorial proof is known for the latter.

Since $q^{\binom{bn}2}\sim\sigma_n^b$ as $q\to\zeta$, a primitive $n$-th root of unity, the congruence \eqref{cong} is seen to be an extension of the trivial ($q$-Lucas) congruence
\begin{equation*}
{\qbin{an}{bn}}\equiv\binom ab=\binom{a-1}b+\binom{a-1}{a-b}\pmod{\Phi_n(q)}.
\end{equation*}

The principal goal of this note is to provide a modulo $\Phi_n(q)^4$ extension of \eqref{cong}
(see Lemma~\ref{lem1} below)
as well as to use the result for extending the congruences \eqref{straub} and~\eqref{pan}.
In this way, our theorems provide two $q$-extensions of the congruence
\begin{equation*}
\binom{ap}{bp}\equiv\binom{a}{b}+ab(a-b)\binom{a}{b}p\sum_{k=1}^{p-1}\frac1k\pmod{p^4}
\quad\text{for prime}\; p>3.
\end{equation*}
The latter can be continued further to higher powers of primes \cite{Mes11}, and our `mechanical' approach here
suggests that one may try\,---\,with a lot of effort!\,---\,to deduce corresponding $q$-analogues.

\begin{theorem}
\label{th1}
The congruence
\begin{align}
{\qbin{an}{bn}}_q
&\equiv{\qbin{a}{b}}_{q^{n^2}}
-b(a-b)\binom ab(q^n-1)\biggl(a\sum_{k=1}^{n-1}\frac{q^k}{1-q^k}+\frac{a(n-1)}2
\nonumber\\ &\quad
+\frac{(a+1)(n^2-1)}{24}(q^n-1)
+\frac{(b(a-b)n-a-2)(n^2-1)}{48}\,(q^n-1)^2\biggr)
\label{straub-e}
\end{align}
holds modulo $\Phi_n(q)^4$ for any $n>1$.
\end{theorem}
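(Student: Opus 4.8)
The plan is to deduce Theorem~\ref{th1} from the modulo $\Phi_n(q)^4$ refinement of the two-monomial congruence~\eqref{cong} announced as Lemma~\ref{lem1}, by a purely algebraic re-expansion. Writing that lemma as
\[
{\qbin{an}{bn}}_q\,\sigma_n^bq^{\binom{bn}2}\equiv M(q)\pmod{\Phi_n(q)^4},
\]
with $M(q)$ the two $q$-monomials of~\eqref{cong} together with the correction terms the lemma supplies, I would first divide by the genuine unit $\sigma_n^bq^{\binom{bn}2}$ of $\mathbb Z[q,q^{-1}]$ to isolate ${\qbin{an}{bn}}_q$, and then aim to recognise the resulting right-hand side as ${\qbin ab}_{q^{n^2}}$ plus the correction displayed in~\eqref{straub-e}.

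The engine of the calculation is that every phase factor occurring here is a binomial power series in the single variable $x=q^n-1$. From the identity $\binom{mn}2=m\binom n2+\binom m2n^2$ and the evaluation $\sigma_nq^{\binom n2}=(1+x)^{(n-1)/2}$---the branch tending to $1$ as $q\to\zeta$, whose square is the honest polynomial $(q^n)^{n-1}=(1+x)^{n-1}$---one obtains the uniform formula
\[
\sigma_n^mq^{\binom{mn}2}=(1+x)^{\binom{mn}2/n},
\]
the right-hand side read as a binomial series over $\mathbb Z[\tfrac16]$. Since $\Phi_n(q)\mid x$ forces $x^4\equiv0\pmod{\Phi_n(q)^4}$, each such series collapses to a polynomial in $x$ of degree $\le3$. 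Expanding ${\qbin ab}_{q^{n^2}}$ the same way through $q^{n^2}=(1+x)^n$ reduces the whole assertion to a finite identity among degree-$3$ polynomials in $x$, once the sole genuinely $q$-dependent quantity is understood.

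That quantity is the sum $S=\sum_{k=1}^{n-1}q^k/(1-q^k)$, and pinning it down is the main obstacle. What is needed is its second-order radial expansion
\[
S+\frac{n-1}2\equiv-\frac{n^2-1}{24}(q^n-1)+\cdots\pmod{\Phi_n(q)^3},
\]
refining the classical evaluation $\sum_{k=1}^{n-1}(1-\zeta^k)^{-1}=\tfrac{n-1}2$. The leading coefficient $-\tfrac{n^2-1}{24}$ is forced, being exactly what makes Theorem~\ref{th1} collapse to Straub's~\eqref{straub} modulo $\Phi_n(q)^3$; the next coefficient is the new input that carries us to $\Phi_n(q)^4$. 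I would extract this expansion from the radial asymptotics of the $q$-factorial (equivalently the $q$-digamma function) at $\zeta$ in the spirit of the $q$-microscope of~\cite{GZ19}, or directly from the cyclotomic partial-fraction expansion of $\sum_k(1-q^k)^{-1}$.

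It then remains to substitute the series for the monomials and for $S$ into the divided form of Lemma~\ref{lem1}, collect powers of $x=q^n-1$, and match against~\eqref{straub-e} order by order. A useful internal check is that $S$ is independent of $a$ and $b$, so that the prefactor $b(a-b)\binom ab$ and the whole $a,b$-dependence of the coefficient $b(a-b)n-a-2$ of the final $(q^n-1)^3$ term must emerge solely from the binomial combinatorics of the two monomials and of ${\qbin ab}_{q^{n^2}}$; getting this to close is a stringent test of the bookkeeping. Apart from the second-order control of $S$, every step is a mechanical truncation of binomial series modulo $x^4$.
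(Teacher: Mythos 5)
Your overall architecture coincides with the paper's: isolate ${\qbin{an}{bn}}_q$ from Lemma~\ref{lem1} by dividing by the unit $\sigma_n^bq^{\binom{bn}2}$, expand the monomial phases and ${\qbin ab}_{q^{n^2}}$ in powers of $x=q^n-1$ (the latter expansion is exactly the paper's Lemma~\ref{lem3}), and match through order $x^3$. The genuine gap sits in the step you yourself call the main obstacle. You assume the harmonic sum $S=\sum_{k=1}^{n-1}q^k/(1-q^k)$ satisfies a congruence
\begin{equation*}
S\equiv-\frac{n-1}2-\frac{n^2-1}{24}\,(q^n-1)+c\,(q^n-1)^2\pmod{\Phi_n(q)^3}
\end{equation*}
with some explicit coefficient $c$ (``the next coefficient is the new input''). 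No such $c$ exists. By \eqref{eq3}, the $\eps^2$-coefficient in the radial expansion of $S$ at $q=\zeta(1-\eps)$ is
\begin{equation*}
S_{n-1}(\zeta)=\frac12\sum_{k=1}^{n-1}\frac{k\zeta^k((k+1)\zeta^k+k-1)}{(1-\zeta^k)^3},
\end{equation*}
and this number genuinely depends on which primitive $n$-th root of unity $\zeta$ is chosen: already for $n=3$ the two primitive cube roots give the complex-conjugate, non-real values $\tfrac12\pm\tfrac{i}{2\sqrt3}$. In particular $S$ is \emph{not} congruent modulo $\Phi_n(q)^3$ to any polynomial in $q^n$ with constant rational coefficients\,---\,which is precisely why the right-hand side of \eqref{straub-e} must carry the sum $\sum_{k}q^k/(1-q^k)$ itself rather than a polynomial in $q^n$. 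Consequently your reduction ``to a finite identity among degree-$3$ polynomials in $x$'' cannot be carried out: after all the binomial expansions, $\zeta$-dependent quantities remain on both sides.

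What closes the argument in the paper is not an evaluation of that coefficient but a cancellation, and seeing it requires the explicit shape of Lemma~\ref{lem1}, which your proposal treats as a black box. The $\eps^3$-part of \eqref{asymp1} contains the term $b(a-b)\binom ab\,\eps^3\,anS_{n-1}(\zeta)$ with the \emph{same} sum $S_{n-1}(\zeta)$; the paper's Lemma~\ref{lem2} (Section~\ref{harm}) then trades $\eps^2S_{n-1}(\zeta)$ for
\begin{equation*}
H_{n-1}(q)+\frac{n-1}2+\frac{n^2-1}{24}\,(q^n-1)-\frac{(n-1)(n^2-1)}{48n}\,(q^n-1)^2+O(\eps^3),
\end{equation*}
so that the $\zeta$-dependent constant is absorbed into the genuinely $q$-dependent harmonic sum appearing in \eqref{straub-e}, and everything that is left is a universal polynomial identity in $x$ of the kind you envisage. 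Once this identification replaces the nonexistent ``next coefficient,'' your plan becomes the paper's proof; without it, the order-by-order matching cannot terminate.
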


\begin{theorem}
\label{th2}
For any $n>1$, we have the congruence
\begin{align}
{\qbin{an}{bn}}_q
&\equiv\sigma_n^{b(a-b)}q^{b(a-b)\binom n2}{\qbin{a}{b}}_{q^n}
-ab(a-b)\binom ab(q^n-1)\biggl(\sum_{k=1}^{n-1}\frac{q^k}{1-q^k}+\frac{n-1}2
\nonumber\\ &\quad
-\frac{(b(a-b)n-1)(n^2-1)}{48}\,(q^n-1)^2\biggr)
\pmod{\Phi_n(q)^4}.
\label{pan-e}
\end{align}
\end{theorem}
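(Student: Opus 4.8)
The plan is to establish \eqref{pan-e} directly from the explicit factorisation of the $q$-binomial coefficient (the device behind Lemma~\ref{lem1}), since this produces the $q^n$-binomial on the right of \eqref{pan-e} automatically. In $\qbin{an}{bn}_q=[an]!/([bn]!\,[(a-b)n]!)$ I separate the factors $[k]$ with $n\mid k$ from those with $n\nmid k$. Writing $Q=q^n$ and $[jn]=[j]_Q[n]$, the multiples of $n$ assemble into $\qbin ab_{q^n}$ (the $[n]$'s cancelling), while for the rest $[jn+r]/[r]=1-(Q^j-1)u_r$ with $u_r=q^r/(1-q^r)$; the surviving $\prod_r[r]$ occurs with total multiplicity $a$ above and $b+(a-b)=a$ below, hence cancels. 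Thus
\[
\qbin{an}{bn}_q=\qbin ab_{q^n}\,R_n(q),\qquad
R_n(q)=\prod_{r=1}^{n-1}\frac{F_a(u_r)}{F_b(u_r)\,F_{a-b}(u_r)},
\]
where $F_c(u)=\prod_{j=0}^{c-1}\bigl(1-(Q^j-1)u\bigr)$. As $Q-1=q^n-1\equiv0\pmod{\Phi_n(q)}$, each $F_c\equiv1$, and the task reduces to expanding the unit $R_n(q)$ to third order in $q^n-1$.

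The body of the proof is then mechanical. From $\log F_c(u)=-\sum_{m\ge1}\frac{u^m}{m}\,T_m(c)$ with $T_m(c)=\sum_{j=0}^{c-1}(Q^j-1)^m$, and from $[c]_Q=\sum_{i\ge0}\binom{c}{i+1}(Q-1)^i$, the sums $T_1(c),T_2(c),T_3(c)$ are explicit polynomials in $q^n-1$ beginning at orders $1,2,3$; only $m\le3$ survives modulo $\Phi_n(q)^4$. Summing over $r$ brings in the power sums $S_m=\sum_{r=1}^{n-1}u_r^m$, of which $S_1=\sum_{k=1}^{n-1}q^k/(1-q^k)$ is kept symbolic\,---\,this is exactly the irreducible term of \eqref{pan-e}\,---\,while $S_2,S_3$ are reduced modulo $\Phi_n(q)$ only to the precision in which they enter. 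Exponentiating $\log R_n(q)$, multiplying by the third-order expansion of $\qbin ab_{q^n}$, and comparing against the prefactor $\sigma_n^{b(a-b)}q^{b(a-b)\binom n2}$ then yields the right-hand side of \eqref{pan-e}; the orders $(q^n-1)^0$ and $(q^n-1)^1$ recover \eqref{andrews} while order $(q^n-1)^2$ recovers \eqref{pan}, which serve as consistency checks.

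The arithmetic input is the $\Phi_n(q)$-adic reduction of these power sums. The leading value $S_1\equiv-(n-1)/2$ is the classical identity $\sum_{r=1}^{n-1}(1-\zeta^r)^{-1}=(n-1)/2$, and is precisely why \eqref{pan-e} carries the compensating $+(n-1)/2$; the next reductions, involving the cotangent-type sum $\sum_{r=1}^{n-1}(1-\zeta^r)^{-2}$, combine with the $j$-sums and the prefactor to produce the constant $(n^2-1)/24$ already present in \eqref{straub} and \eqref{pan} and, one order further, its refinement $(n^2-1)/48$.

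The step I expect to be the main obstacle is the $(q^n-1)^3$ coefficient, i.e.\ verifying $\tfrac{(b(a-b)n-1)(n^2-1)}{48}$. At this order three contributions collide\,---\,the cubic term $\tfrac13S_3\,T_3(c)$, the cross term between the quadratic part of $\log R_n(q)$ and the expansion of $\qbin ab_{q^n}$, and the cubic tail of the prefactor\,---\,and the asymmetric factor $b(a-b)n$ emerges only after the three ranges $c=a,b,a-b$ are combined. Keeping $S_1$ symbolic while truncating $S_2,S_3$ to exactly the needed precision, and arranging this cancellation transparently, is the delicate bookkeeping on which the identity turns; the lower-order terms, by contrast, are routine.
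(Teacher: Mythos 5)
Your factorisation ${\qbin{an}{bn}}_q={\qbin{a}{b}}_{q^n}R_n(q)$ is correct, and your route is genuinely different from the paper's, which never factors the $q$-binomial coefficient: there, ${\qbin{an}{bn}}_q$ is expanded at each primitive root of unity via the $q$-binomial theorem and a root-of-unity filtration (Lemma~\ref{lem1}), and a matching expansion for the right-hand side of \eqref{pan-e} is produced by applying the $n=1$ case of Lemma~\ref{lem1} at $q^n$. Your decomposition produces the factor ${\qbin{a}{b}}_{q^n}$ automatically, which is a real economy, and it is indeed sound through modulus $\Phi_n(q)^3$, i.e.\ for reproving \eqref{andrews} and \eqref{pan}, where all the root-of-unity data needed is rational.

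At the new order, however --- the $(q^n-1)^3$ coefficient, exactly the step you label ``delicate bookkeeping'' --- there is a genuine gap. Since $T_2(c)$ is divisible by $(q^n-1)^2$, you need $S_2=\sum_{r=1}^{n-1}u_r^2$ modulo $\Phi_n(q)^2$, i.e.\ to first order in $\eps$ at $q=\zeta(1-\eps)$. The value $S_2(\zeta)$ is rational (the numbers $1/(1-\zeta^r)$ are the roots of $x^n-(x-1)^n$, so all unweighted sums are polynomial in $n$), but the first-order coefficient is \emph{not} of this type: it contains the weighted sum
\begin{equation*}
W=\sum_{r=1}^{n-1}\frac{r\zeta^r(1+\zeta^r)}{(1-\zeta^r)^3},
\end{equation*}
which depends on the choice of $\zeta$ and is not even real (for $n=3$ and $\zeta=e^{2\pi i/3}$ one gets $W=i/(3\sqrt3)$). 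In your expansion $W$ survives at order $\eps^3$ with the nonzero coefficient $\tfrac12n^2b(a-b)(a-1)\binom ab$, whereas on the right-hand side of \eqref{pan-e} the only non-polynomial object is the symbolic $S_1$; so the coefficient comparison cannot close with the arithmetic inputs you list --- cotangent-type sums such as $\sum_r(1-\zeta^r)^{-2}$ are unweighted and rational and can never produce $W$. The missing idea is a symmetrisation identity: $h(z)=z(1+z)/(1-z)^3$ satisfies $h(1/z)=-h(z)$, so the substitution $r\mapsto n-r$ gives $\sum_r r^2h(\zeta^r)=n\sum_r rh(\zeta^r)$, that is, $W=\tfrac2nS_{n-1}(\zeta)-\tfrac{n^2-1}{24}$ in the paper's notation. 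This folds $W$ back into the second-order Taylor coefficient of the symbolic $S_1=H_{n-1}(q)$ --- precisely the role played by the paper's Lemma~\ref{lem2} --- and only then do the $\zeta$-dependent contributions recombine, with total coefficient $nb(a-b)\bigl(1+(a-1)\bigr)\binom ab=nab(a-b)\binom ab$, matching \eqref{pan-e}. With this identity added (plus a short separate word on the prefactor $\sigma_n^{b(a-b)}q^{b(a-b)\binom n2}$, which for $n$ even is a power of $-q^{n/2}$ rather than of $q^n$, so its $(q^n-1)$-expansion needs its own justification), your plan does go through; without it, it stalls at precisely the coefficient that constitutes the new content of Theorem~\ref{th2}.
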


We point out that 
a congruence $A_1(q)\equiv A_2(q)\pmod{P(q)}$
for \emph{rational functions} $A_1(q),A_2(q)\in\mathbb Q(q)$ and a polynomial $P(q)\in\mathbb Q[q]$
is understood as follows: the polynomial $P(q)$ is relatively prime with the denominators of $A_1(q)$ and $A_2(q)$, and
$P(q)$ divides the numerator $A(q)$ of the difference $A_1(q)-A_2(q)$.
The latter is equivalent to the condition that for each zero $\alpha\in\mathbb C$ of $P(q)$ of multiplicity $k$,
the polynomial $(q-\alpha)^k$ divides $A(q)$ in $\mathbb C[q]$; in other words, $A_1(q)-A_2(q)=O\bigl((q-\alpha)^k\bigr)$
as $q\to\alpha$. This latter interpretation underlies our argument in proving the results.
For example, the congruence \eqref{cong} can be established by verifying that
\begin{equation}
{\qbin{an}{bn}}_q(1-\eps)^{\binom{bn}2}
=\binom{a-1}b+\binom{a-1}{a-b}\sigma_n^a(1-\eps)^{\binom{an}2}+O(\eps^2)
\quad\text{as}\; \eps\to0^+,
\label{cong-asymp}
\end{equation}
when $q=\zeta(1-\eps)$ and $\zeta$ is any primitive $n$-th root of unity.

Our approach goes in line with \cite{GZ19} and shares similarities with the one developed by Gorodetsky in \cite{Gor18},
who reads off the asymptotic information of binomial sums at roots of unity through $q$-Gauss congruences.
It does not seem straightforward to us but Gorodetsky's method may be capable of proving Theorems~\ref{th1} and~\ref{th2}.
Furthermore, the part \cite[Sect.~2.3]{Gor18} contains a survey on $q$-analogues of \eqref{classics}.

After proving an asymptotical expansion for $q$-binomial coefficients at roots of unity in Section~\ref{roots}
(essentially, the $O(\eps^4)$-extension of \eqref{cong-asymp}), we perform a similar asymptotic analysis
for $q$-harmonic sums in Section~\ref{harm}. The information gathered is then applied in Section~\ref{proofs}
to proving Theorems~\ref{th1} and~\ref{th2}. Finally, in Section~\ref{ratios} we generalise
the congruences \eqref{straub} and \eqref{pan} in a different direction, to integral factorial ratios.

\section{Expansions of $q$-binomials at roots of unity}
\label{roots}

This section is exclusively devoted to an asymptotical result, which forms the grounds of our later arithmetic analysis.
We moderate its proof by highlighting principal ingredients (and difficulties) of derivation and leaving some technical details to the reader.

\begin{lemma}
\label{lem1}
Let $\zeta$ be a primitive $n$-th root of unity. Then, as $q=\zeta(1-\eps)\to\zeta$ radially,
\begin{align}
&
{\qbin{an}{bn}}_{q}\sigma_n^bq^{\binom{bn}2}
-\binom{a-1}b-\binom{a-1}{a-b}\sigma_n^aq^{\binom{an}2}
\nonumber\\ &\quad
=b(a-b)\binom ab\bigl(-\eps^2n^2\rho_0(a,n)
+\eps^3n^2\rho_1(a,b,n)
+\eps^3\,anS_{n-1}(\zeta)\bigr)
+O(\eps^4),
\label{asymp1}
\end{align}
where
\begin{align*}
\rho_0(a,n)&=\frac{3(an-1)^2-an^2-1}{24},
\\
\rho_1(a,b,n)&=\frac{abn^2(an-1)(an-n-2)+(an+2)(an-1)^2(an-3)+an^2+a+2}{48}
\end{align*}
and
$$
S_{n-1}(q)=\frac12\sum_{k=1}^{n-1}\frac{kq^k((k+1)q^k+k-1)}{(1-q^k)^3}.
$$
\end{lemma}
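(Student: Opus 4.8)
The plan is to compute the radial asymptotic expansion of the $q$-binomial coefficient ${\qbin{an}{bn}}_q$ directly, working to order $\eps^3$ in the substitution $q=\zeta(1-\eps)$. The natural starting point is the product formula
$$
{\qbin{an}{bn}}_q=\prod_{j=1}^{bn}\frac{1-q^{a n - bn + j}}{1-q^{j}}
$$
(or equivalently the ratio of $q$-factorials), and to separate the factors according to whether the exponent $j$ is divisible by $n$ or not. **First I would** split the indices into residue classes modulo $n$: when $n\mid j$, the factor $1-q^{j}=1-\zeta^{j}(1-\eps)^{j}=1-(1-\eps)^{j}$ vanishes to first order in $\eps$ (since $\zeta^{j}=1$), contributing the genuine ``binomial'' part; when $n\nmid j$, the factor tends to the nonzero limit $1-\zeta^{j}$ and contributes a smooth perturbation. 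The divisible indices reassemble into an ordinary binomial coefficient in the variable $(1-\eps)^{n}$, which accounts for the main term $\binom{a-1}{b}+\binom{a-1}{a-b}\sigma_n^a q^{\binom{an}2}$ after one keeps track of the prefactor $\sigma_n^b q^{\binom{bn}2}$.

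**Next I would** take logarithms to convert the product into a sum, expand each $\log(1-q^{m})$ as a power series in $\eps$, and collect terms by order. For the non-divisible factors one writes $1-q^{m}=(1-\zeta^{m})\bigl(1+\frac{\zeta^{m}}{1-\zeta^{m}}(\,1-(1-\eps)^{m}\,)\bigr)$ and expands $(1-\eps)^{m}=1-m\eps+\binom m2\eps^{2}-\binom m3\eps^{3}+\cdots$; summing the resulting contributions over the relevant residue classes produces sums of the shape $\sum_{k}\frac{k\,\zeta^{k}}{1-\zeta^{k}}$ and higher analogues. The key identity to invoke is that such character-weighted sums over a full period telescope into the closed forms appearing in $\rho_0$, $\rho_1$, and in $S_{n-1}(\zeta)$; in particular the definition
$$
S_{n-1}(q)=\frac12\sum_{k=1}^{n-1}\frac{kq^k((k+1)q^k+k-1)}{(1-q^k)^3}
$$
is exactly the coefficient structure that emerges at order $\eps^{3}$ from the third-derivative terms of the logarithm, so I would reverse-engineer the bookkeeping to land on this sum rather than expand it.

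**The hard part will be** the $\eps^{2}$ and $\eps^{3}$ coefficients: after exponentiating back, cross terms between the divisible and non-divisible factors mix, and one must carefully separate the purely numerical pieces (the polynomials $\rho_0(a,n)$ and $\rho_1(a,b,n)$ in $a,b,n$, which come from the divisible block and from closed-form evaluations of symmetric sums like $\sum_{k=1}^{n-1}\frac{\zeta^k}{(1-\zeta^k)}$ and $\sum_{k=1}^{n-1}\frac{k^2}{\,\cdots\,}$ that reduce to polynomials in $n$) from the genuinely $\zeta$-dependent piece carried by $S_{n-1}(\zeta)$. Establishing that everything except the $anS_{n-1}(\zeta)$ term collapses to polynomials in $n^{2}$ requires the standard evaluations
$$
\sum_{k=1}^{n-1}\frac{1}{1-\zeta^{k}}=\frac{n-1}{2},\qquad
\sum_{k=1}^{n-1}\frac{\zeta^{k}}{(1-\zeta^{k})^{2}}=-\frac{(n-1)(n+1)}{12},
$$
together with one further third-order analogue; matching these against the claimed $\rho_0,\rho_1$ is the calculation I would treat as routine but lengthy. **Finally I would** verify the overall prefactor $b(a-b)\binom ab$ and the $O(\eps^{4})$ remainder by checking that all lower-order discrepancies cancel, which serves as an internal consistency test against the known modulo $\Phi_n(q)^2$ statement \eqref{cong-asymp}.
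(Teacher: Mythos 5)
Your proposal is correct in outline, but it follows a genuinely different route from the paper. The paper never expands the single coefficient ${\qbin{an}{bn}}_q$ from its product formula; instead it applies the root-of-unity filter $\frac1n\sum_{j=1}^n(\,\cdot\,)\big|_{x\mapsto\zeta^jx}$ to the $q$-binomial theorem $(x;q)_{an}=\sum_{k}{\qbin{an}{k}}_q(-x)^kq^{\binom k2}$, which extracts all terms ${\qbin{an}{bn}}(-x)^{bn}q^{\binom{bn}2}$, $b=0,\dots,a$, simultaneously; it then differentiates $f(q)=(x;q)_{an}$ logarithmically to third order, evaluates the $j$-averages of products of fractions $\frac{\zeta^jx}{1-\zeta^jx}$ in closed form as rational functions of $x^n$, and reads off the coefficient of $x^{bn}$ at the very end. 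Your residue-class splitting of $\prod_{j=1}^{bn}(1-q^{(a-b)n+j})/(1-q^j)$ plus logarithmic expansion is the dual bookkeeping: what the paper's filter achieves by averaging over $j$, you must achieve by summing over $k\not\equiv0\pmod n$. The filter buys the paper two conveniences: the monomials $\binom{a-1}b$ and $\binom{a-1}{a-b}\sigma_n^aq^{\binom{an}2}$ fall out automatically from powers of $x^n/(1-x^n)$ multiplying $(1-x^n)^a$, and the $\zeta$-dependent ``exceptional'' sums are isolated as the two multiple sums it computes separately. In your setup both features must be reconstructed by hand; in particular the non-divisible factors already contribute at order $\eps$, so the main terms do \emph{not} come from the divisible block alone, as your first paragraph suggests --- they emerge only after combining the divisible block, the prefactor $\sigma_n^bq^{\binom{bn}2}$, and the order-$\eps$ part of the non-divisible product. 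In exchange, your approach is more elementary and stays with the object actually being estimated.

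One specific caution. It is not true that every character-weighted sum you will meet evaluates to a closed form independent of $\zeta$: for instance $\sum_{k=1}^{n-1}\frac{k\zeta^k}{1-\zeta^k}$ genuinely depends on the choice of primitive root $\zeta$, whereas
$$
\sum_{k=1}^{n-1}\frac{k\zeta^k}{(1-\zeta^k)^2}=-\frac{n(n^2-1)}{24}
$$
does not, because $x/(1-x)^2$ is invariant under $x\mapsto1/x$ and the pairing $k\leftrightarrow n-k$ applies. Sums of the first kind do arise at order $\eps^2$ of your expansion (from the cross terms you mention), while the lemma's $\eps^2$-coefficient is $\zeta$-independent; so you must verify that these pieces cancel among themselves. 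They do --- for example via $\frac{x^2}{(1-x)^2}=\frac{x}{(1-x)^2}-\frac{x}{1-x}$ the weight-$k$ contributions collapse onto the invariant sum above --- but this cancellation is a step of the proof, not a term-by-term evaluation. At order $\eps^3$ the analogous combination does \emph{not} cancel, and it is precisely what the statement names $anS_{n-1}(\zeta)$. With this point attended to, your plan goes through and lands on \eqref{asymp1}.
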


\begin{proof}
It follows from the $q$-binomial theorem \cite[Chap.~10]{AAR99} that
\begin{equation}
(x;q)_N=\sum_{k=0}^N{\qbin Nk}_q(-x)^kq^{\binom k2}.
\label{qbin}
\end{equation}
Taking $N=an$, for a primitive $n$-th root of unity $\zeta=\zeta_n$, we have
\begin{equation}
\frac1n\sum_{j=1}^n(\zeta^jx;q)_{an}
=\sum_{\substack{k=0\\n\mid k}}^{an}{\qbin{an}k}(-x)^kq^{k(k-1)/2}
=\sum_{b=0}^a{\qbin{an}{bn}}(-x)^{bn}q^{bn(bn-1)/2}.
\label{eq1}
\end{equation}

When $q=\zeta(1-\eps)$, we get $\d/\d\eps=-\zeta\,(\d/\d q)$.
If
$$
f(q)=(x;q)_{an} \quad\text{and}\quad
g(q)=\frac{\d}{\d q}\log f(q)=-\sum_{\ell=1}^{an-1}\frac{\ell q^{\ell-1}x}{1-q^\ell x},
$$
then $f(q)|_{\eps=0}=(1-x^n)^a$ and
$$
\frac{\d f}{\d q}=fg, \quad
\frac{\d^2f}{\d q^2}=f\biggl(g^2+\frac{\d g}{\d q}\biggr), \quad
\frac{\d^3f}{\d q^3}=f\biggl(g^3+3g\frac{\d g}{\d q}+\frac{\d^2g}{\d q^2}\biggr).
$$
In particular,
\begin{align*}
\frac{\d f}{\d\eps}\bigg|_{\eps=0}
&=(1-x^n)^a\sum_{\ell=1}^{an-1}\frac{\ell\zeta^\ell x}{1-\zeta^\ell x},
\\
\frac{\d^2f}{\d\eps^2}\bigg|_{\eps=0}
&=(1-x^n)^a\biggl(\biggl(\sum_{\ell=1}^{an-1}\frac{\ell\zeta^\ell x}{1-\zeta^\ell x}\biggr)^2
-\sum_{\ell=1}^{an-1}\biggl(\frac{\ell^2\zeta^{2\ell}x^2}{(1-\zeta^\ell x)^2}+\frac{\ell(\ell-1)\zeta^\ell x}{1-\zeta^\ell x}\biggr)\biggr)
\\ \intertext{and}
\frac{\d^3f}{\d\eps^3}\bigg|_{\eps=0}
&=(1-x^n)^a\biggl(\biggl(\sum_{\ell=1}^{an-1}\frac{\ell\zeta^\ell x}{1-\zeta^\ell x}\biggr)^3
\\ &\qquad
-3\sum_{\ell=1}^{an-1}\frac{\ell\zeta^\ell x}{1-\zeta^\ell x}
\sum_{\ell=1}^{an-1}\biggl(\frac{\ell^2\zeta^{2\ell}x^2}{(1-\zeta^\ell x)^2}+\frac{\ell(\ell-1)\zeta^\ell x}{1-\zeta^\ell x}\biggr)
\\ &\qquad
+\sum_{\ell=1}^{an-1}\biggl(\frac{2\ell^3\zeta^{3\ell}x^3}{(1-\zeta^\ell x)^3}+\frac{3\ell^2(\ell-1)\zeta^{2\ell}x^2}{(1-\zeta^\ell x)^2}+\frac{\ell(\ell-1)(\ell-2)\zeta^\ell x}{1-\zeta^\ell x}\biggr)\biggr).
\end{align*}
Now observe the following summation formulae:
\begin{align*}
\frac1n\sum_{j=1}^n\frac{x}{1-x}\bigg|_{x\mapsto\zeta^jx}
&=\frac{x^n}{1-x^n},
\\
\frac1n\sum_{j=1}^n\biggl(\frac{x}{1-x}\biggr)^2\bigg|_{x\mapsto\zeta^jx}
&=\frac{nx^n}{(1-x^n)^2}-\frac{x^n}{1-x^n},
\displaybreak[2]\\
\frac1n\sum_{j=1}^n\frac{x}{1-x}\,\frac{\zeta^kx}{1-\zeta^kx}\bigg|_{x\mapsto\zeta^jx}
&=-\frac{x^n}{1-x^n} \quad\text{for}\; k\not\equiv0\pmod n,
\displaybreak[2]\\
\frac1n\sum_{j=1}^n\biggl(\frac{x}{1-x}\biggr)^3\bigg|_{x\mapsto\zeta^jx}
&=\frac{n^2x^n(1+x^n)}{2(1-x^n)^3}-\frac{3nx^n}{2(1-x^n)^2}+\frac{x^n}{1-x^n},
\displaybreak[2]\\
\frac1n\sum_{j=1}^n\frac{x}{1-x}\,\frac{\zeta^kx}{1-\zeta^kx}\,\frac{\zeta^\ell x}{1-\zeta^\ell x}\bigg|_{x\mapsto\zeta^jx}
&=\frac{x^n}{1-x^n} \quad\text{for}\; k\not\equiv0,\; \ell\not\equiv0,\; k\not\equiv\ell \pmod n,
\\ \intertext{and}
\frac1n\sum_{j=1}^n\biggl(\frac{x}{1-x}\biggr)^2\frac{\zeta^kx}{1-\zeta^kx}\bigg|_{x\mapsto\zeta^jx}
&=\frac{nx^n}{(1-x^n)^2}\,\frac{\zeta^k}{1-\zeta^k}+\frac{x^n}{1-x^n}
\\ &\qquad\qquad\qquad\text{for}\; k\not\equiv0\pmod n.
\end{align*}
Implementing this information into \eqref{eq1} we obtain
\begin{align*}
&
\sum_{b=0}^a{\qbin{an}{bn}}(-x)^{bn}q^{bn(bn-1)/2}\bigg|_{q=\zeta(1-\eps)}
=(1-x^n)^a\Biggl(1+\eps\,\frac{x^n}{1-x^n}\sum_{\ell=1}^{an-1}\ell
\\ &\quad
-\frac{\eps^2}2\,\frac{x^n}{1-x^n}\biggl(\sum_{\ell=1}^{an-1}\ell\biggr)^2
+\frac{\eps^2}2\,\frac{nx^n}{(1-x^n)^2}\sum_{\substack{\ell_1,\ell_2=1\\ \ell_1\equiv\ell_2\pmod n}}^{an-1}\ell_1\ell_2
\\ &\quad
-\frac{\eps^2}2\biggl(\frac{nx^n}{(1-x^n)^2}-\frac{x^n}{1-x^n}\biggr)\sum_{\ell=1}^{an-1}\ell^2
-\frac{\eps^2}2\,\frac{x^n}{1-x^n}\sum_{\ell=1}^{an-1}\ell(\ell-1)
\\ &\quad
+\dots
+\frac{\eps^3}2\,\frac{nx^n}{(1-x^n)^2}\sum_{\substack{\ell_1,\ell_2,\ell_3=1\\ \ell_1\equiv\ell_2\not\equiv\ell_3\pmod n}}^{an-1}\ell_1\ell_2\ell_3\,\frac{\zeta^{\ell_3-\ell_1}}{1-\zeta^{\ell_3-\ell_1}}
+\dotsb
\displaybreak[2]\\ &\quad
+\dots
-\frac{\eps^3}2\,\frac{nx^n}{(1-x^n)^2}\sum_{\substack{\ell_1,\ell_2=1\\ \ell_1\not\equiv\ell_2\pmod n}}^{an-1}
\ell_1^2\ell_2\,\frac{\zeta^{\ell_2-\ell_1}}{1-\zeta^{\ell_2-\ell_1}}
+\dotsb\Biggr)
+O(\eps^4),
\end{align*}
where we intentionally omit all \emph{ordinary} $\eps^3$-terms\,---\,those that sum up to polynomials in $a$ and $n$ multiplied by powers of $x^n/(1-x^n)$, like the ones appearing as $\eps$- and $\eps^2$-terms.
The exceptional $\eps^3$-summands are computed separately:
\begin{multline*}
\sum_{\substack{\ell_1,\ell_2,\ell_3=1\\ \ell_1\equiv\ell_2\not\equiv\ell_3\pmod n}}^{an-1}\ell_1\ell_2\ell_3\,\frac{\zeta^{\ell_3-\ell_1}}{1-\zeta^{\ell_3-\ell_1}}
=-a^3\sum_{k=1}^{n-1}\frac{k\zeta^k((k+1)\zeta^k+k-1)}{(1-\zeta^k)^3}
\\
-\frac{a^3n(n-1)(3an(an-1)(an-2)+n^2(an-2a-1)-2)}{48}
\end{multline*}
and
\begin{multline*}
\sum_{\substack{\ell_1,\ell_2=1\\ \ell_1\not\equiv\ell_2\pmod n}}^{an-1}
\ell_1^2\ell_2\,\frac{\zeta^{\ell_2-\ell_1}}{1-\zeta^{\ell_2-\ell_1}}
=-a^2\sum_{k=1}^{n-1}\frac{k\zeta^k((k+1)\zeta^k+k-1)}{(1-\zeta^k)^3}
\\
-\frac{a^2n(n-1)(an(an-1)(2an-3)-an^2-1)}{24}.
\end{multline*}
The finale of our argument is comparison of the coefficients of powers of $x^n$ on both sides of the relation obtained;
this way we arrive at the asymptotics in~\eqref{asymp1}.
\end{proof}

\section{A $q$-harmonic sum}
\label{harm}

Again, the notation $\zeta$ is reserved for a primitive $n$-th root of unity.
For the sum
$$
H_{n-1}(q)=\sum_{k=1}^{n-1}\frac{q^k}{1-q^k},
$$
we have
\begin{align*}
\frac{\d H_{n-1}}{\d q}
&=\sum_{k=1}^{n-1}\frac{kq^{k-1}}{(1-q^k)^2},
\\
\frac{\d^2H_{n-1}}{\d q^2}
&=\sum_{k=1}^{n-1}\frac{kq^{k-2}((k+1)q^k+k-1)}{(1-q^k)^3}=2q^{-2}S_{n-1}(q),
\end{align*}
where $S_{n-1}(q)$ is defined in Lemma~\ref{lem1}.
It follows that, for $q=\zeta(1-\eps)$,
\begin{align}
H_{n-1}(q)
&=\sum_{k=1}^{n-1}\frac{\zeta^k}{1-\zeta^k}
-\eps\sum_{k=1}^{n-1}\frac{k\zeta^k}{(1-\zeta^k)^2}
+\frac{\eps^2}2\sum_{k=1}^{n-1}\frac{k\zeta^k((k+1)\zeta^k+k-1)}{(1-\zeta^k)^3}
+O(\eps^3)
\nonumber\\
&=-\frac{n-1}2+\frac{(n^2-1)n}{24}\,\eps+S_{n-1}(\zeta)\eps^2
+O(\eps^3)
\displaybreak[2] \nonumber\\
&=-\frac{n-1}2-\frac{n^2-1}{24}\,(q^n-1)+\frac{(n-1)(n^2-1)}{48n}(q^n-1)^2
\nonumber\\ &\qquad
+\frac1{n^2}S_{n-1}(\zeta)(q^n-1)^2
+O(\eps^3)
\label{eq3}
\end{align}
as $\eps\to0$, where we use
$$
\eps=-\frac1n\,(q^n-1)+\frac{n-1}{2n^2}\,(q^n-1)^2+O(\eps^3)
\quad\text{as}\; \eps\to0.
$$
The latter asymptotics implies that
\begin{align*}
\sum_{k=1}^{n-1}\frac{q^k}{1-q^k}
&\equiv-\frac{n-1}2-\frac{n^2-1}{24}\,(q^n-1)+\frac{(n-1)(n^2-1)}{48n}(q^n-1)^2
\\ &\qquad
+\frac{(q^n-1)^2}{2n^2}\sum_{k=1}^{n-1}\frac{kq^k((k+1)q^k+k-1)}{(1-q^k)^3}
\pmod{\Phi_n(q)^3},
\end{align*}
which may be viewed as an extension of
$$
\sum_{k=1}^{n-1}\frac{q^k}{1-q^k}
\equiv-\frac{n-1}2-\frac{n^2-1}{24}\,(q^n-1)
\pmod{\Phi_n(q)^2}
$$
recorded, for example, in~\cite{Mes11}.

A different consequence of \eqref{eq3} is the following fact.

\begin{lemma}
\label{lem2}
The term $\eps^2S_{n-1}(\zeta)$ appearing in the expansion \eqref{asymp1} can be replaced with
$$
H_{n-1}(q)+\frac{n-1}2+\frac{n^2-1}{24}\,(q^n-1)-\frac{(n-1)(n^2-1)}{48n}(q^n-1)^2+O(\eps^3)
$$
when $q=\zeta(1-\eps)$ and $\eps\to0$.
\end{lemma}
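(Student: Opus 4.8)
The plan is to read the asymptotic expansion \eqref{eq3} \emph{backwards}. Rather than treating it as an expansion of the $q$-function $H_{n-1}(q)$ in powers of $\eps$, I would solve it for the single $\zeta$-dependent constant $S_{n-1}(\zeta)$ (packaged as $\eps^2S_{n-1}(\zeta)$), thereby expressing that constant through the honest $q$-function $H_{n-1}(q)$ together with elementary powers of $q^n-1$. This is exactly the ``different consequence of \eqref{eq3}'' advertised just above the statement, and its purpose is to let us trade the root-of-unity constant $S_{n-1}(\zeta)$ for a genuine $q$-expression when converting the radial asymptotics into polynomial congruences in Section~\ref{proofs}.

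Concretely, I would start from the final form of \eqref{eq3},
$$
H_{n-1}(q)=-\frac{n-1}2-\frac{n^2-1}{24}(q^n-1)+\frac{(n-1)(n^2-1)}{48n}(q^n-1)^2+\frac1{n^2}S_{n-1}(\zeta)(q^n-1)^2+O(\eps^3),
$$
and simply isolate the $S_{n-1}(\zeta)$-term on one side:
$$
\frac1{n^2}S_{n-1}(\zeta)(q^n-1)^2=H_{n-1}(q)+\frac{n-1}2+\frac{n^2-1}{24}(q^n-1)-\frac{(n-1)(n^2-1)}{48n}(q^n-1)^2+O(\eps^3).
$$
The right-hand side is already precisely the claimed replacement expression, so the only thing left is to recognise the left-hand side as $\eps^2S_{n-1}(\zeta)$. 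For that I would square the inversion $\eps=-\frac1n(q^n-1)+\frac{n-1}{2n^2}(q^n-1)^2+O(\eps^3)$ recorded after \eqref{eq3}, giving $\frac1{n^2}(q^n-1)^2=\eps^2+O(\eps^3)$; since $S_{n-1}(\zeta)$ is a constant in $\eps$, this yields $\frac1{n^2}S_{n-1}(\zeta)(q^n-1)^2=\eps^2S_{n-1}(\zeta)+O(\eps^3)$, and the assertion follows.

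There is essentially no obstacle here beyond careful bookkeeping of the error terms. Because $q^n-1=-n\eps+O(\eps^2)$, the symbols $O(\eps^3)$ and $O\bigl((q^n-1)^3\bigr)$ are interchangeable, so the substitution $(q^n-1)^2=n^2\eps^2+O(\eps^3)$ disturbs only the $O(\eps^3)$ remainder. The one point genuinely worth flagging is the role this replacement plays downstream: in \eqref{asymp1} the sum $S_{n-1}(\zeta)$ enters through the term $an\,\eps^3S_{n-1}(\zeta)=an\eps\cdot\bigl(\eps^2S_{n-1}(\zeta)\bigr)$, so substituting our $q$-expression for $\eps^2S_{n-1}(\zeta)$ introduces an error $an\eps\cdot O(\eps^3)=O(\eps^4)$, invisible modulo $\Phi_n(q)^4$. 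It is precisely this degree accounting that makes the replacement legitimate for the intended application.
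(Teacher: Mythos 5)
Your proposal is correct and is exactly the paper's argument: the paper presents Lemma~\ref{lem2} as "a different consequence of \eqref{eq3}," namely solving \eqref{eq3} for the term $\frac1{n^2}S_{n-1}(\zeta)(q^n-1)^2$ and identifying $\frac1{n^2}(q^n-1)^2=\eps^2+O(\eps^3)$ via the recorded inversion of $\eps$ in powers of $q^n-1$. Your closing remark on the downstream error budget (the $an\eps\cdot O(\eps^3)=O(\eps^4)$ accounting) correctly captures why the replacement suffices for the modulo $\Phi_n(q)^4$ applications in Section~\ref{proofs}.
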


\section{Proof of the theorems}
\label{proofs}

In order to prove Theorems \ref{th1} and \ref{th2} we need to produce `matching' asymptotics for
$$
{\qbin{a}{b}}_{q^{n^2}}
\quad\text{and}\quad
\sigma_n^{b(a-b)}q^{b(a-b)\binom n2}{\qbin{a}{b}}_{q^n},
$$
respectively. These happen to be easier than that from Lemma~\ref{lem1}
because $q^{n^2}=(1-\eps)^{n^2}$ and $q^n=(1-\eps)^n$ do not depend
on the choice of primitive $n$-th root of unity $\zeta$ when $q=\zeta(1-\eps)$.

\begin{lemma}
\label{lem3}
As $q=\zeta(1-\eps)\to\zeta$ radially,
\begin{align*}
&
{\qbin{a}{b}}_{q^{n^2}}\sigma_n^bq^{\binom{bn}2}
-\binom{a-1}b-\binom{a-1}{a-b}\sigma_n^aq^{\binom{an}2}
\\ &\quad
=b(a-b)\binom ab\bigl(-\eps^2n^2\hat\rho_0(a,n)
+\eps^3n^2\hat\rho_1(a,b,n)\bigr)
+O(\eps^4),
\end{align*}
where
\begin{align*}
\hat\rho_0(a,n)&=\frac{3(an-1)^2-(a+1)n^2}{24},
\\
\hat\rho_1(a,b,n)&=\frac{bn(an-1)((an-1)^2-(a+1)n^2)}{48}
\\ &\qquad
+\frac{an(an-1)^3-6(an-1)^2+2(a+1)n^2}{48}.
\end{align*}
\end{lemma}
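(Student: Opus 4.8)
The plan is to exploit the remark preceding the statement: once we set $q=\zeta(1-\eps)$, every quantity on the left-hand side becomes a function of $\eps$ alone. First I would clear the root-of-unity content out of the monomial prefactors. Because $\zeta^n=1$, one has the elementary evaluations $\zeta^{\binom{bn}2}=\sigma_n^b$ and $\zeta^{\binom{an}2}=\sigma_n^a$, whence $\sigma_n^bq^{\binom{bn}2}=(1-\eps)^{\binom{bn}2}$ and $\sigma_n^aq^{\binom{an}2}=(1-\eps)^{\binom{an}2}$, while $q^{n^2}=(1-\eps)^{n^2}$. Thus the left-hand side equals
$$
{\qbin ab}_{(1-\eps)^{n^2}}(1-\eps)^{\binom{bn}2}-\binom{a-1}b-\binom{a-1}{a-b}(1-\eps)^{\binom{an}2},
$$
an explicit real-analytic function of $\eps$ near $0$, and it suffices to compute its Taylor expansion through order $\eps^3$.

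The core is the expansion of $Q\mapsto{\qbin ab}_Q$ at $Q=1$, composed with $Q=(1-\eps)^{n^2}$. Writing ${\qbin ab}_Q=\prod_{j=1}^b[a-b+j]_Q/[j]_Q$ and using $[m]_Q=m-\binom m2(1-Q)+\binom m3(1-Q)^2-\binom m4(1-Q)^3+\cdots$, I would expand $\log{\qbin ab}_Q$ in powers of $u=1-Q$ up to $u^3$; its coefficients are elementary power sums $\sum_{j=1}^b(\cdots)$ with closed forms polynomial in $a,b$. Exponentiating and substituting $u=1-(1-\eps)^{n^2}=n^2\eps-\binom{n^2}2\eps^2+\binom{n^2}3\eps^3+O(\eps^4)$ produces ${\qbin ab}_{q^{n^2}}$ as a series in $\eps$ to order $\eps^3$. (Alternatively one applies the $q$-binomial theorem \eqref{qbin} with base $q^{n^2}$, giving $(x^n;q^{n^2})_a=\sum_{b=0}^a{\qbin ab}_{q^{n^2}}\sigma_n^b(-x)^{bn}q^{n^2\binom b2}$, and reads off the coefficient of $x^{bn}$, up to the extra factor $q^{\binom{bn}2}=q^{n^2\binom b2}q^{bn(n-1)/2}$; this parallels the proof of Lemma~\ref{lem1} but dispenses with the averaging over $\zeta$.) Simultaneously I would expand the prefactors $(1-\eps)^{\binom{bn}2}$ and $(1-\eps)^{\binom{an}2}$ as ordinary binomial series in $\eps$.

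Assembling the three expansions, the constant term is $\binom ab-\binom{a-1}b-\binom{a-1}{a-b}$, which vanishes by Pascal's rule since $\binom{a-1}{a-b}=\binom{a-1}{b-1}$; the coefficient of $\eps^1$ collapses to $\binom ab\bigl(-\binom{bn}2-\tfrac{n^2}2b(a-b)+\tfrac ba\binom{an}2\bigr)=0$ after the same simplification $\binom{a-1}{a-b}=\tfrac ba\binom ab$, so the expansion indeed starts at $\eps^2$. It then remains to read off the coefficients of $\eps^2$ and $\eps^3$, extract the common factor $b(a-b)\binom ab$, and check that the surviving polynomials equal $-n^2\hat\rho_0(a,n)$ and $n^2\hat\rho_1(a,b,n)$. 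I expect the genuine obstacle to lie exactly in the $\eps^3$ bookkeeping: that coefficient blends the third-order term of the $q$-binomial expansion, the cross terms of its lower orders with the two monomial factors, and the $n$-dependence buried in $\binom{an}2$ and $\binom{bn}2$, and one must verify that, after pulling out $b(a-b)\binom ab$, the tangle collapses to the stated $\hat\rho_1$. Organising the power sums so that the factors $b$ and $a-b$ emerge automatically, rather than confirming the final polynomial identity by brute force, is the step demanding the most care.
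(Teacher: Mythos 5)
Your proposal is correct, and your parenthetical ``alternative'' is in fact precisely the paper's own proof: the paper applies the $q$-binomial theorem \eqref{qbin} with $N=a$, replacing $x$ by $x^nq^{\binom n2}$ and $q$ by $q^{n^2}$, which gives
$(x^nq^{\binom n2};q^{n^2})_a=\sum_{b=0}^a{\qbin ab}_{q^{n^2}}\sigma_n^b(-x)^{bn}q^{\binom{bn}2}$
(the shift by $q^{\binom n2}$ in the argument absorbs exactly your ``extra factor'' $q^{bn(n-1)/2}$, via $\binom{bn}2=n^2\binom b2+b\binom n2$); it then writes the left-hand side as $\prod_{\ell=0}^{a-1}\bigl(1-y(1-\eps)^{\ell n^2+\binom n2}\bigr)$ with $y=\sigma_nx^n$, expands in powers of $\eps$, and compares coefficients of powers of $x^n$, exactly as in Lemma~\ref{lem1} but, as you observe, with no averaging over roots of unity needed, since every exponent of $x$ occurring is already a multiple of~$n$. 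Your primary route\,---\,expanding $\log{\qbin ab}_Q$ in $u=1-Q$ via the product formula and then substituting $u=1-(1-\eps)^{n^2}$\,---\,is an equally valid reorganization of the same Taylor-expansion argument: it treats each $b$ separately and is self-contained (elementary power sums over $j=1,\dots,b$ replace the generating product), at the cost of having to arrange those sums so that the factor $b(a-b)\binom ab$ emerges by hand, whereas the generating-function route handles all $b$ simultaneously and lets the product structure do that bookkeeping. Your explicit verification that the $\eps^0$ and $\eps^1$ coefficients vanish (Pascal's rule together with $\binom{a-1}{a-b}=\tfrac ba\binom ab$) is correct, and the remaining $\eps^2$/$\eps^3$ extraction you defer is the same finite computation that the paper likewise leaves to the reader.
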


\begin{proof}
For $N=a$ in \eqref{qbin}, take $x^nq^{\binom n2}$ and $q^{n^2}$ for $x$ and~$q$:
\begin{equation*}
(x^nq^{\binom n2};q^{n^2})_a=\sum_{b=0}^a{\qbin ab}_{q^{n^2}}\sigma_n^b(-x)^{bn}q^{\binom{bn}2}.
\end{equation*}
Then, for $q=\zeta(1-\eps)$, we write $y=\sigma_nx^n$ to obtain
\begin{align*}
(x^nq^{\binom n2};q^{n^2})_a
&=(y(1-\eps)^{\binom n2};(\eps)^{n^2})_a
=\prod_{\ell=0}^{a-1}\bigl(1-y(1-\eps)^{\ell n^2+\binom n2}\bigr)
\\
&=(1-y)^a\prod_{\ell=0}^{a-1}\biggl(1-\frac{y}{1-y}
\sum_{i=1}^{\ell n^2+\binom n2}\binom{\ell n^2+\binom n2}i(-\eps)^i\biggr).
\end{align*}
To conclude, we apply the same argument as in the proof of Lemma~\ref{lem1}.
\end{proof}

\begin{proof}[Proof of Theorem~\textup{\ref{th1}}]
Combining the expansions in Lemmas~\ref{lem1}--\ref{lem3} we find out that
\begin{align*}
&
{\qbin{an}{bn}}_{q}\sigma_n^bq^{\binom{bn}2}
-{\qbin{a}{b}}_{q^{n^2}}\sigma_n^bq^{\binom{bn}2}
=-b(a-b)\binom ab(q^n-1)\biggl(a\sum_{k=1}^{n-1}\frac{q^k}{1-q^k}+\frac{a(n-1)}2
\\ &\quad
+\frac{(a+1)(n^2-1)}{24}(q^n-1)
+\frac{(b(a-b)n-a-2)(n^2-1)}{48}\,(q^n-1)^2\biggr)+O(\eps^4)
\end{align*}
as $q=\zeta(1-\eps)\to\zeta$ radially. This means that the difference of both sides
is divisible by $(q-\zeta)^4$ for any $n$-th primitive root of unity $\zeta$, hence by $\Phi_n(q)^4$.
The latter property is equivalent to the congruence~\eqref{straub-e}.
\end{proof}

\begin{proof}[Proof of Theorem~\textup{\ref{th2}}]
We first use Lemma \ref{lem1} with $n=1$:
\begin{align*}
&
{\qbin{a}{b}}_{q}q^{\binom{b}2}
-\binom{a-1}b-\binom{a-1}{a-b}q^{\binom{a}2}
\\ &\quad
=b(a-b)\binom ab\bigl(-(1-q)^2\rho_0(a,1)+(1-q)^3\rho_1(a,b,1)\bigr)
+O\bigl((1-q)^4\bigr)
\end{align*}
as $q\to1$. Now, take $n>1$ arbitrary and apply this relation for $q$ replaced with $q^n$, where $q=\zeta(1-\eps)$, $0<\eps<1$ and $\zeta$ is a primitive $n$-th root of unity:
\begin{align*}
&
\sigma_n^{b(a-b)}q^{b(a-b)\binom n2}{\qbin{a}{b}}_{q^n}
\\ &\quad
=\binom{a-1}b(1-\eps)^{b(a-b)\binom n2-\binom b2n}
+\binom{a-1}{a-b}(1-\eps)^{b(a-b)\binom n2+\binom a2n-\binom b2n}
\\ &\quad\qquad
+b(a-b)\binom ab\bigl(-(1-(1-\eps)^n)^2\rho_0(a,1)+\eps^3n^3\rho_1(a,b,1)\bigr)
\\ &\quad\qquad\quad\times
(1-\eps)^{b(a-b)\binom n2-\binom b2n}
+O(\eps^4)
\displaybreak[2]\\ &\quad
=\binom{a-1}b(1-\eps)^{-\binom{bn}2+ab\binom n2}
+\binom{a-1}{a-b}(1-\eps)^{\binom{an}2-\binom{bn}2-a(a-b)\binom n2}
\\ &\quad\qquad
+b(a-b)\binom ab\bigl(-\eps^2n^2\rho_0(a,1)+\eps^3n^2\bigl((n-1)\rho_0(a,1)+n\rho_1(a,b,1)\bigr)\bigr)
\\ &\quad\qquad\quad\times
\biggl(1-\frac{((a-b)n-a+1)bn}2\,\eps+O(\eps^2)\biggr)
+O(\eps^4)
\end{align*}
as $\eps\to0$. At the same time, from Lemma~\ref{lem1} we have
\begin{align*}
{\qbin{an}{bn}}_q
&=\binom{a-1}b(1-\eps)^{-\binom{bn}2}+\binom{a-1}{a-b}(1-\eps)^{\binom{an}2-\binom{bn}2}
\\ &\qquad
+b(a-b)\binom ab\bigl(-\eps^2n^2\rho_0(a,n)
+\eps^3n^2\rho_1(a,b,n)
+\eps^3\,anS_{n-1}(\zeta)\bigr)
\\ &\qquad\quad\times
\biggl(1+\binom{bn}2\eps+O(\eps^2)\biggr)
+O(\eps^4)
\end{align*}
as $\eps\to0$. Using
$$
(1-\eps)^N=1-N\eps+\binom N2\eps^2-\binom N3\eps^3+O(\eps^4)
\quad\text{as}\; \eps\to0
$$
for $N=-\binom{bn}2+ab\binom n2$, $\binom{an}2-\binom{bn}2-a(a-b)\binom n2$, $-\binom{bn}2$ and $\binom{an}2-\binom{bn}2$ we deduce from the two expansions and Lemma~\ref{lem2} that, for $q=\zeta(1-\eps)$,
\begin{align*}
{\qbin{an}{bn}}_q
-\sigma_n^{b(a-b)}q^{b(a-b)\binom n2}{\qbin{a}{b}}_{q^n}
&=-ab(a-b)\binom ab(q^n-1)\biggl(\sum_{k=1}^{n-1}\frac{q^k}{1-q^k}+\frac{n-1}2
\\ &\qquad
-\frac{(b(a-b)n-1)(n^2-1)}{48}\,(q^n-1)^2\biggr)
+O(\eps^4)
\end{align*}
as $\eps\to0$. This implies the congruence in~\eqref{pan-e}.
\end{proof}

\section{$q$-rious congruences}
\label{ratios}

In this final part, we look at the binomial coefficients as particular instances
of integral ratios of factorials, also known as Chebyshev--Landau factorial ratios.
In the $q$-setting these are defined by
$$
D_n(q)=D_n(\ba,\bb;q)=\frac{[a_1n]!\dotsb[a_rn]!}{[b_1n]!\dotsb[b_sn]!},
$$
where $\ba=(a_1,\dots,a_r)$ and $\bb=(b_1,\dots,b_s)$ are positive integers satisfying
\begin{equation}
a_1+\dots+a_r=b_1+\dots+b_s
\label{cond1}
\end{equation}
and
\begin{equation}
\lf a_1x\rf+\dots+\lf a_rx\rf\ge\lf b_1x\rf+\dots+\lf b_sx\rf
\quad\text{for all}\; x>0
\label{cond2}
\end{equation}
(see, for example, \cite{WZ11}), $\lf\,\cdot\,\rf$ denotes the integer part of a number.
Then $D_n(q)\in\mathbb Z[q]$ are polynomials with values
$$
D_n(1)=\frac{(a_1n)!\dotsb(a_rn)!}{(b_1n)!\dotsb(b_sn)!}
$$
at $q=1$, and the congruences \eqref{straub} and \eqref{pan} generalise as follows.

\begin{theorem}
\label{th3}
In the notation
$$
c_i=c_i(\ba,\bb)=\binom{a_1}i+\dots+\binom{a_r}i-\binom{b_1}i-\dotsb-\binom{b_s}i
\quad\text{for}\; i=2,3,
$$
the congruences
\begin{align}
D_n(q)&\equiv D_1(q^{n^2})-D_1(1)\frac{c_2\,(n^2-1)}{24}\,(q^n-1)^2\pmod{\Phi_n(q)^3}
\label{straub-g}
\\ \intertext{and}
D_n(q)&\equiv \sigma_n^{c_2}q^{c_2\binom n2}D_1(q^n)+D_1(1)\frac{(c_2+c_3)\,(n^2-1)}{12}\,(q^n-1)^2\pmod{\Phi_n(q)^3}
\label{pan-g}
\end{align}
are valid for any $n\ge1$.
\end{theorem}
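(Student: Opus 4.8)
The plan is to peel off from $D_n(q)$ a completely $\zeta$-independent factor and reduce everything to the radial asymptotics of a single ``regular'' quotient, for which the root-of-unity sums of Sections~\ref{roots} and~\ref{harm} already supply all the ingredients. Writing each $q$-factorial as $[m]!=\prod_{k=1}^m(1-q^k)/(1-q)^m$ and using \eqref{cond1} to cancel all powers of $1-q$, I first record
\[
D_n(q)=\frac{\prod_i\prod_{k=1}^{a_in}(1-q^k)}{\prod_j\prod_{k=1}^{b_jn}(1-q^k)}.
\]
Splitting each inner product into the factors with $n\mid k$ and those with $n\nmid k$, the first group collapses (substitute $Q=q^n$ and invoke \eqref{cond1} once more) to $D_1(q^n)$, so that $D_n(q)=D_1(q^n)\,R(q)$ with
\[
R(q)=\prod_{r=1}^{n-1}\frac{\prod_i\prod_{\ell=0}^{a_i-1}(1-q^{r+\ell n})}{\prod_j\prod_{\ell=0}^{b_j-1}(1-q^{r+\ell n})}.
\]
Every factor of $R$ tends to a nonzero limit as $q\to\zeta$, so $D_1(q^n)$ is a unit modulo $\Phi_n(q)$ and the whole task is transferred to expanding $R(q)$ to order $\eps^2$.

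Next I would expand $\log R(q)$ with $q=\zeta(1-\eps)$, grouping by residue $r$ and writing $v_r=\zeta^r/(1-\zeta^r)$. A direct Taylor expansion of $\log(1-q^{r+\ell n})$, together with the elementary sums $\sum_\ell(r+\ell n)=a_ir+n\binom{a_i}2$, $\sum_\ell(r+\ell n)^2$ and $\sum_\ell\binom{r+\ell n}2$, expresses the $\eps^0$, $\eps^1$, $\eps^2$ coefficients in terms of $c_2$, $c_3$ and the sums $\sum_r v_r$, $\sum_r v_r^2$, $\sum_r rv_r$, $\sum_r rv_r^2$. The $\eps^0$-coefficient vanishes by \eqref{cond1} (so $R(\zeta)=1$), and the $\eps^1$-coefficient equals $-c_2\binom n2$, which already matches $(1-\eps)^{c_2\binom n2}=\sigma_n^{c_2}q^{c_2\binom n2}$ to first order.

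The delicate point, and the main obstacle, is the $\eps^2$-coefficient, because the $r$-weighted sums $\sum_r rv_r$ and $\sum_r rv_r^2$ are \emph{genuinely} $\zeta$-dependent. The observation that rescues the computation is that they enter only through the combination $\sum_r r(v_r+v_r^2)=\sum_r r\zeta^r/(1-\zeta^r)^2$, which is exactly the $\zeta$-independent harmonic sum evaluated in Section~\ref{harm}, namely $-n(n^2-1)/24$. The remaining pieces use $\sum_r v_r=-\tfrac{n-1}2$ and $\sum_r v_r^2=-\tfrac{(n-1)(n-5)}{12}$ (the latter read off from the logarithmic derivative of $1+q+\dots+q^{n-1}$ at $q=1$). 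Collecting everything then gives
\[
R(q)=\sigma_n^{c_2}q^{c_2\binom n2}+\frac{(c_2+c_3)(n^2-1)}{12}(q^n-1)^2+O(\eps^3),
\]
so that $D_n(q)-\bigl(\sigma_n^{c_2}q^{c_2\binom n2}D_1(q^n)+D_1(1)\tfrac{(c_2+c_3)(n^2-1)}{12}(q^n-1)^2\bigr)=O(\eps^3)$ (using $D_1(q^n)=D_1(1)+O(\eps)$ and $(q^n-1)^2=O(\eps^2)$) for every primitive $\zeta$. Divisibility by $(q-\zeta)^3$ for all such $\zeta$ yields divisibility by $\Phi_n(q)^3$, which is \eqref{pan-g}.

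Finally I would deduce \eqref{straub-g} from \eqref{pan-g} with no further root-of-unity analysis. The difference of the two stated right-hand sides involves $D_1$ only at the $\zeta$-independent arguments $q^n=(1-\eps)^n$ and $q^{n^2}=(1-\eps)^{n^2}$, together with $\sigma_n^{c_2}q^{c_2\binom n2}=(1-\eps)^{c_2\binom n2}$. Expanding $\log D_1(Q)$ near $Q=1$ as $\log D_1(1)-\tfrac{c_2}2(1-Q)+\tfrac{2c_3-3c_2}{24}(1-Q)^2+O((1-Q)^3)$ and substituting $Q=q^n$ and $Q=q^{n^2}$, one checks that this difference is $O(\eps^3)$, hence $\equiv0\pmod{\Phi_n(q)^3}$; so the two congruences are equivalent and both hold. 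The only real labour is the bookkeeping of the double sums over $(r,\ell)$ in the $\eps^2$-term, everything else reducing to the closed-form sums already on hand.
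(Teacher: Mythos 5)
Your proof is correct, but it takes a genuinely different route from the paper's. The paper never expands $D_n(q)$ directly: it observes that both congruences \eqref{straub-g} and \eqref{pan-g} are multiplicative (if each holds for two balanced factorial ratios, it holds for their product), so by induction on $r+s$ everything reduces to the two cases $D_n(q)={\qbin{an}{bn}}$ and $\tilde D_n(q)={\qbin{an}{bn}}^{-1}$; the first case is exactly the known congruences \eqref{straub} and \eqref{pan} of Straub and Pan (with $c_2=b(a-b)$ and $c_2+c_3=ab(a-b)/2$), and the reciprocal case is handled by inverting the expansion ${\qbin{an}{bn}}=B(q)+cB(1)\eps^2+O(\eps^3)$ and using $c_i((b,a-b),(a))=-c_i((a),(b,a-b))$. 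You instead prove everything from scratch via the factorization $D_n(q)=D_1(q^n)R(q)$ and a second-order expansion of $\log R$ at $q=\zeta(1-\eps)$. I checked your key computational claims: the $\zeta$-dependent sums do enter only through $\sum_{r=1}^{n-1}r\zeta^r/(1-\zeta^r)^2=-n(n^2-1)/24$ (the first-order term of $H_{n-1}$ in Section~\ref{harm}); the evaluation $\sum_{r}\zeta^{2r}/(1-\zeta^r)^2=-(n-1)(n-5)/12$ is right; the collected $\eps^2$-coefficient of $R(q)-(1-\eps)^{c_2\binom n2}$ is indeed $n^2(n^2-1)(c_2+c_3)/12$ (the term $-\tfrac14nc_2(n-1)$ from $\log R$ cancels against the linear part of $\binom{c_2\binom n2}{2}$); and the expansion $\log D_1(Q)=\log D_1(1)-\tfrac{c_2}2(1-Q)+\tfrac{2c_3-3c_2}{24}(1-Q)^2+O\bigl((1-Q)^3\bigr)$ does make the two right-hand sides agree to $O(\eps^3)$, so \eqref{straub-g} and \eqref{pan-g} are equivalent. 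Your route buys self-containedness---it reproves \eqref{straub} and \eqref{pan} as the special case $\ba=(a)$, $\bb=(b,a-b)$ rather than citing them---and it gives the rational-function generality for free, since the factorization shows $D_n(q)$ has neither zero nor pole at $\zeta$ whenever \eqref{cond1} holds (the paper must argue this separately for ${\qbin{an}{bn}}^{-1}$). The paper's route buys brevity: multiplicativity plus a single inversion of an $\eps$-expansion replaces all of your $\eps^2$-bookkeeping.
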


Observe that when $n=p>3$ and $q\to1$, one recovers from any of these two the congruences
$$
D_p(1)\equiv D_1(1)\pmod{p^3},
$$
of which \eqref{classics} is a special case.
Furthermore, it is tempting to expect that these two families of $q$-congruences may be generalised even further
in the spirit of Theorems~\ref{th1} and \ref{th2}, and that the polynomials $D_n(q)$ satisfy $q$-Gauss relations from~\cite{Gor18}.
We do not pursue this line here.

\begin{proof}[Proof of Theorem~\textup{\ref{th3}}]
Though the congruences \eqref{straub-g} and \eqref{pan-g} are between \emph{polynomials} rather than
rational functions, we prove the theorem without assumption~\eqref{cond2}: in other words, the congruences remain
true for the rational functions $D_n(q)$ provided that the balancing condition \eqref{cond1} (equivalently, $c_1(\ba,\bb)=1$ in the above notation for $c_i$) is satisfied. In turn, this more general statement follows from its validity for
particular cases
$$
D_n(q)=\frac{[an]!}{[bn]!\,[(a-b)n]!}
\quad\text{and}\quad
\tilde D_n(q)=\frac{[bn]!\,[(a-b)n]!}{[an]!}
$$
by induction (on $r+s$, say). Indeed, the inductive step exploits the property of both \eqref{straub-g} and \eqref{pan-g}
to imply the congruence for the product $D_n(\ba,\bb;q)D_n(\tilde{\ba},\tilde{\bb};q)$ whenever it is already known
for the individual factors; we leave this simple fact to the reader and only discuss its other appearance when dealing with $\tilde D_n(q)$ below.
Notice that $\qbin{an}{bn}\equiv\binom{an}{bn}\not\equiv0\pmod{\Phi_n(q)}$, so that
$\tilde D_n(q)={\qbin{an}{bn}}^{-1}$ is well defined modulo any power of~$\Phi_n(q)$.

For $D_n(q)=\qbin{an}{bn}$ we have $c_2=b(a-b)$ and $c_2+c_3=ab(a-b)/2$, hence \eqref{straub-g} and \eqref{pan-g}
follow from \eqref{straub} and \eqref{pan}, respectively.

Turning to $q=\zeta(1-\eps)$, where $0<\eps<1$ and $\zeta$ is a primitive $n$-th root of unity,
write the congruences \eqref{straub} and \eqref{pan} as the asymptotic relation
$$
\qbin{an}{bn}=B(q)+cB(1)\eps^2+O(\eps^3) \quad\text{as}\; \eps\to0,
$$
in which
\begin{gather*}
B(q)={\qbin ab}_{q^{n^2}}, \quad c=-\frac{b(a-b)n^2(n^2-1)}{24} \quad\text{for the case \eqref{straub}}
\\ \intertext{and}
B(q)=\sigma_n^{b(a-b)}q^{b(a-b)\binom n2}{\qbin ab}_{q^n}, \quad c=\frac{ab(a-b)n^2(n^2-1)}{24} \quad\text{for the case \eqref{pan}}.
\end{gather*}
Then
\begin{align*}
\tilde D_n(q)={\qbin{an}{bn}}^{-1}
&=B(q)^{-1}\,\bigl(1+cB(1)B(q)^{-1}\eps^2+O(\eps^3)\bigr)^{-1}
\displaybreak[2]\\
&=B(q)^{-1}-cB(1)B(q)^{-2}\eps^2+O(\eps^3)
\\
&=B(q)^{-1}-cB(1)^{-1}\eps^2+O(\eps^3),
\end{align*}
because we have $B(q)=B(1)+O(\eps)$ as $\eps\to0$ for our choices of $B(q)$.
The resulting expansion implies the truth of \eqref{straub-g} and \eqref{pan-g} for
$\tilde D_n(q)=D_n((b,a-b),(a);q)$ in view~of
$$
c_i((b,a-b),(a))=-c_i((a),(b,a-b))
\quad\text{for}\; i=2,3.
$$
As explained above, this also establishes the general case of \eqref{straub-g} and~\eqref{pan-g}.
\end{proof}

For related Lucas-type congruences satisfied by the $q$-factorial ratios $D_n(q)$ see \cite{ABDJ17}.

\medskip
\noindent
\textbf{Acknowledgements.}
I would like to thank Armin Straub for encouraging me to complete this project and for supply of available knowledge on the topic.
I am grateful to one of the referees whose feedback was terrific and helped me improving the exposition.
Further, I~thank Victor Guo for valuable comments on parts of this work.


\begin{thebibliography}{99}

\bibitem{ABDJ17}
\textsc{B. Adamczewski}, \textsc{J.\,P. Bell}, \textsc{\"E. Delaygue} and \textsc{F. Jouhet},
Congruences modulo cyclotomic polynomials and algebraic independence for $q$-series,
\emph{S\'em. Lotharingien Combin.} \textbf{78B} (2017), Article \#54, 12~pages.

\bibitem{And99}
\textsc{G.\,E. Andrews},
$q$-analogs of the binomial coefficient congruences of Babbage, Wolstenholme and Glaisher,
\emph{Discrete Math.} \textbf{204} (1999), 15--25.

\bibitem{AAR99}
\textsc{G.\,E. Andrews}, \textsc{R. Askey} and \textsc{R. Roy},
\emph{Special functions},
Encyclopedia Math. Appl. \textbf{71} (Cambridge University Press, Cambridge, 1999).

\bibitem{Gor18}
\textsc{O. Gorodetsky},
$q$-congruences, with applications to supercongruences and the cyclic sieving phenomenon,
\emph{Intern. J. Number Theory} (to appear);
\emph{Preprint} \href{http://arxiv.org/abs/1805.01254}{\texttt{arXiv:\,1805.01254 [math.NT]}}.

\bibitem{GZ19}
\textsc{V.\,J.\,W. Guo} and \textsc{W. Zudilin},
A $q$-microscope for supercongruences,
\emph{Adv. in Math.} \textbf{346} (2019), 329--358.

\bibitem{Mes11}
\textsc{R. Me\v strovi\'c},
Wolstenholme's theorem: its generalizations and extensions in the last hundred and fifty years (1862--2012),
\emph{Preprint} \href{http://arxiv.org/abs/1111.3057}{\texttt{arXiv:\,1111.3057 [math.NT]}}.

\bibitem{Pan13}
\textsc{H. Pan},
Factors of some lacunary $q$-binomial sums,
\emph{Monatsh. Math.} \textbf{172} (2013), 387--398.

\bibitem{Str11}
\textsc{A. Straub},
$q$-analog of Ljunggren's binomial congruence,
in \emph{DMTCS Proceedings: 23rd International Conference on Formal Power Series and Algebraic Combinatorics}
(FPSAC  2011),  pp.~897--902 (June 2011);
\emph{Preprint} \href{http://arxiv.org/abs/1103.3258}{\texttt{arXiv:\,1103.3258 [math.NT]}}.

\bibitem{Str18}
\textsc{A. Straub},
Supercongruences for polynomial analogs of the Ap\'ery numbers,
\emph{Proc. Amer. Math. Soc.} \textbf{147} (2019), 1023--1036.

\bibitem{WZ11}
\textsc{A.\,O. Warnaar} and \textsc{W. Zudilin},
A $q$-rious positivity,
\emph{Aequat. Math.} \textbf{81} (2011), 177--183.

\end{thebibliography}
\end{document}